\def\myfont{\fontfamily{cmss}\selectfont}
\newtheorem{thm}{Theorem}[section]
\newtheorem{cor}[thm]{Corollary}
\newtheorem{lemma}[thm]{Lemma}
\newtheorem{proposition}[thm]{Proposition}
\newtheorem{definition}[thm]{Definition}
\newtheorem{example}[thm]{Example}
\newtheorem{remark}[thm]{Remark}
\newtheorem{problem}[thm]{Problem}
\begin{document}
\date{14 Feb  2016}
 \title{Controlling the statistical properties of expanding maps}
\author{Stefano Galatolo and Mark Pollicott}
\date{Pisa University and Warwick University}
\maketitle
 \myfont

\begin{abstract}
How can one change a system, in order to change its statistical properties in a prescribed way?
In this note we consider a control  problem related to the theory of linear response.  
Given an expanding map of the unit circle with an associated invariant density we can consider the inverse problem of finding which first  order changes  in the transformation can achieve a given first order perturbation in the density.
We show the general mathematical structure of the problem, the existence of many solutions in the case of  expanding maps of the circle and the existence of optimal ones. We investigate in depth the example of the doubling map, where we give a complete solution of the problem.
\end{abstract}

\section{Introduction}

An important idea,   which has attracted much interest in recent years,  is that of linear response.
The basic principle is that in many cases a first order  change in a system leads to a corresponding  first order change in its equilibrium state.  
There has been a wealth of work on linear response theory for dynamical systems after the pioneering work of Ruelle, who  developed a formula for the  first derivative of the physical  or SRB measure for  hyperbolic systems \cite{R}. In subsequent works the approach  was simplified, applied to some system outside the uniform hyperbolic case and applied to other kinds of more or less chaotic or hyperbolic system  (see \cite{Ba2} for a detailed overview, and \cite{Ba3} for very recent results on intermittent systems).  

Many problems involving physical and social systems are modelled by chaotic dynamics and therefore many mathematical ideas have been developed and implemented in the context of  the prediction and description of the statistical behaviour of a chaotic system. 
 Linear response itself is used to understand the behaviour of  complex systems out of equilibrium (see e.g.  \cite{Lu} for an application to models of the climate evolution).

Besides merely trying to  understand  the behaviour of a given system it is also important  to attempt to understand the extent to which it can be controlled.
 For example,  {\em can one  determine which    small changes in the system will change the statistical behaviour in a prescribed direction? 
Can these changes   be  chosen  optimally, in an appropriate sense?}

Understanding the general behaviour in light of  these questions will be of help in designing efficient strategies of intervention for the control and management of complex, chaotic systems.

We shall initiate the investigation of these  specific   questions in the context of  particularly simple models of chaotic systems.  
In particular, we will concentrate on expanding maps of the circle.
We shall consider  the general mathematical structure of the problem and show that in the case of circle expanding maps, it has many solutions amongst which  we can choose an optimal one, in a suitable sense.
As an illustration, in this article we will also present a complete solution to  the problem in the particular case of  the linear doubling map, finding a solution which minimizes the $L^2$  norm (and other natural norms  on the space).

To formulate the problem more precisely, we    introduce some notation.  Let $T_0: X \to X$ be a $C^4$ expanding orientation preserving map of the circle $X = \mathbb R/\mathbb Z$ of degree $d \geq 2$. 
Let $T_\delta: X \to X$, where  $\delta \in (-\eta, \eta)$  be a  family of $C^3$ expanding maps.
Let us suppose that the dependence of the  family on $\delta$ is   differentiable at $0$, hence  can be written  
$$
T_\delta(x) = T_0(x) + \delta \epsilon(x) + o_{C^3}(\delta) \hbox{  for } x \in X.
$$
where $\epsilon \in C^3(X, \mathbb R)$, and $o_{C^3}(\delta)$ denotes a term whose $C^3$ norm tends to zero faster than $\delta$,   as $\delta \to 0$.\footnote{More precisely
we say that $T_\delta$ is a differentiable  family of $C^3$ expanding maps if there exists $\epsilon \in C^3(X, \mathbb R)$ such that 
 $\|(T_\delta - T_0) /\delta -\epsilon  \|_{C^3}  \to 0$ as $\delta \to 0$, where
 $$
  \|f(x)\|_{C^3} = \sup_{x \in X} |f(x)| + \sup_{x \in X} |f'(x)| + \sup_{x \in X} |f''(x)| + \sup_{x \in X} |f'''(x)|
 $$
    is the usual norm on $C^3$ functions.}

\begin{example}
A simple example might be where $d\geq 2 $ and $T_0(x) = dx \hbox{ (mod $1$)}$ and 
$T_\delta(x) = dx + \delta \sin(2\pi x) \hbox{ (mod $1$)}$ then trivially $\epsilon(x) = \sin(2\pi x)$.
\end{example}

For each $T_\delta$ we can associate a unique  invariant $C^2$ density and it is known that these vary in a differentiable way $\rho_\delta = \rho_0 + \delta \rho^{(1)} + \cdots$.  
It is a folklore result   that the density  of the unique physical  invariant measure of  such a family of maps varies in a differentiable way.  For a more precise statement we have  the following.


\begin{lemma}\label{pert}
Let us assume that $T_\delta$ is a $C^1$ family of $C^3$ expanding maps.  
The density $\rho_\delta \in C^1(X,\mathbb R)$ has a continuously differentiable dependence on $\delta$.
\end{lemma}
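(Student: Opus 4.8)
The natural framework is the transfer (Perron--Frobenius) operator. For each $\delta$ define $L_\delta$ acting on densities by $(L_\delta f)(x) = \sum_{T_\delta(y)=x} f(y)/|T_\delta'(y)|$, characterised equivalently by the duality $\int (L_\delta f)\, g \,\dif x = \int f\, (g\circ T_\delta)\,\dif x$. The invariant density $\rho_\delta$ is exactly the fixed point $L_\delta \rho_\delta = \rho_\delta$ normalised by $\int \rho_\delta\,\dif x = 1$. The first ingredient I would assemble is the standard spectral picture for smooth expanding maps: on $C^1(X)$ (and on $C^2(X)$) the operator $L_\delta$ is quasi-compact, $1$ is a simple eigenvalue which is the only point of the spectrum on the unit circle, and the rest of the spectrum lies in a disc of radius $\theta<1$. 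The Lasota--Yorke inequalities underlying this can be taken uniform in $\delta$ near $0$, so the spectral gap and the rank-one eigenprojection $P_\delta f = \rho_\delta \int f\,\dif x$ are uniformly controlled. This reduces the lemma to the differentiable dependence of $\rho_\delta = P_\delta \mathbf 1$ on $\delta$ in $C^1(X)$.

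The key computation is the derivative of $\delta \mapsto L_\delta$. Differentiating the duality formula at $\delta = 0$ and using $\partial_\delta T_\delta|_{0} = \epsilon$ gives $\int (\dot L_0 f)\,g\,\dif x = \int f\,\epsilon\,(g'\circ T_0)\,\dif x$ for test functions $g$; rewriting the right-hand side through $L_0$ and integrating by parts on the circle yields the explicit formula $\dot L_0 f = -\big(L_0(\epsilon f)\big)'$. This makes transparent what I expect to be the main obstacle: $\dot L_0$ loses one derivative, so it is bounded as $C^2 \to C^1$ but not as $C^1 \to C^1$. Consequently $\delta \mapsto L_\delta$ is not differentiable as a family of operators on a single space $C^k$; it is differentiable only when viewed as a family in $\mathcal L(C^{k+1}, C^k)$. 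Any proof must therefore work with a pair of spaces, a smooth one and a weak one, and exploit that the fixed point $\rho_\delta$ is one derivative more regular (it lies in $C^2$, uniformly) than the target regularity $C^1$ claimed for the dependence.

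With the derivative formula in hand I would produce the candidate for $\rho^{(1)} = \partial_\delta \rho_\delta|_0$ by differentiating the fixed-point relation: $L_0 \rho^{(1)} + \dot L_0 \rho_0 = \rho^{(1)}$, that is $(I - L_0)\rho^{(1)} = -\big(L_0(\epsilon\rho_0)\big)'$. The right-hand side is a total derivative, hence has zero average, and $\rho_0 \in C^2$ makes it a well-defined element of $C^1$; since $1$ is a simple eigenvalue the operator $I - L_0$ is boundedly invertible on the closed hyperplane of zero-average $C^1$ functions, so $\rho^{(1)} = (I - L_0)^{-1}\big(-(L_0(\epsilon\rho_0))'\big) \in C^1$ is uniquely determined. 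This already exhibits the prospective derivative inside $C^1$ and displays the linear-response structure that the rest of the paper will exploit.

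It remains to justify rigorously that $\rho_\delta$ is genuinely differentiable with this derivative and that $\delta \mapsto \rho^{(1)}_\delta$ is continuous. I would do this by a two-norm perturbation argument adapted to the loss of regularity: writing $\rho_\delta = \rho_0 + \delta\rho^{(1)} + r_\delta$ and subtracting the fixed-point equations for $\rho_\delta$, $\rho_0$ and the defining equation for $\rho^{(1)}$ gives $(I - L_0) r_\delta = E_\delta$, where $E_\delta$ collects the remainder $(L_\delta - L_0 - \delta\dot L_0)\rho_0$ together with $(L_\delta - L_0)(\delta\rho^{(1)} + r_\delta)$. Using the uniform $C^2$ bound on $\rho_\delta$ (from the uniform Lasota--Yorke estimate) and the $C^1$ control of $L_\delta - L_0$ acting on $C^2$ coming from the $C^1$ dependence of $T_\delta$ on $\delta$, one checks $\|E_\delta\|_{C^1} = o(\delta) + O(\delta)\,\|r_\delta\|_{C^1}$; inverting $I - L_0$ on zero-average $C^1$ functions and absorbing the $O(\delta)\|r_\delta\|_{C^1}$ term for small $\delta$ gives $\|r_\delta\|_{C^1} = o(\delta)$, which is differentiability at $0$. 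Repeating the argument with base point $\delta$ in place of $0$, and noting that all constants depend continuously on $\delta$ through the uniform spectral data, yields differentiability at every $\delta$ and continuity of the derivative, completing the proof.
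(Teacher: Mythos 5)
The paper does not actually prove Lemma \ref{pert}: it defers to Theorem 20 of \cite{PJ}, remarking that the argument is a standard application of the Implicit Function Theorem for Banach manifolds. Your spectral--perturbative route is therefore necessarily different from the paper's, and it runs parallel to the machinery the paper builds later (Theorem \ref{perturbation} and Proposition \ref{mainprop}). Most of it is right: the duality computation giving $\dot{\mathcal L}_0 f = -\bigl(\mathcal L_0(\epsilon f)\bigr)'$ is correct, and is exactly the formula of Proposition \ref{mainprop} in compact form (expand via $(\mathcal L_0 u)' = \mathcal L_0(u'/T_0') - \mathcal L_0(u\,T_0''/(T_0')^2)$ with $u=\epsilon f$ to recover the three terms); the loss-of-derivative diagnosis is correct; and the candidate $\rho^{(1)} = (I-\mathcal L_0)^{-1}\bigl(-(\mathcal L_0(\epsilon\rho_0))'\bigr)$ on the zero-average hyperplane is the right object. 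One caveat: uniform Lasota--Yorke inequalities alone do not yield a uniform spectral gap or uniformly controlled eigenprojections; one also needs the strong-to-weak smallness $\|\mathcal L_\delta-\mathcal L_0\|_{C^1\to C^0}=O(\delta)$ and a Keller--Liverani-type stability statement, which is precisely what hypothesis 3 of Theorem \ref{perturbation} packages. For expanding circle maps this is standard, so this is presentational rather than substantive.

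The genuine gap is in your final estimate. You write $(I-\mathcal L_0)r_\delta = E_\delta$, where $E_\delta$ contains the term $(\mathcal L_\delta-\mathcal L_0)r_\delta$, and claim $\|E_\delta\|_{C^1} = o(\delta) + O(\delta)\|r_\delta\|_{C^1}$. The implicit bound $\|(\mathcal L_\delta-\mathcal L_0)r_\delta\|_{C^1}\le C\delta\|r_\delta\|_{C^1}$ is false, for exactly the loss-of-derivative reason you yourself identified two paragraphs earlier: $\mathcal L_\delta-\mathcal L_0$ has norm $O(\delta)$ only as an operator $C^2\to C^1$ (or $C^1\to C^0$), while its norm as an operator $C^1\to C^1$ does not even tend to zero (test on $f_N(x)=\sin(2\pi Nx)/(2\pi N)$ with $N\sim 1/\delta$: the derivative difference picks up $f_N'(y_i^\delta)-f_N'(y_i^0)=O(1)$). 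Nor can you retreat to $C\delta\|r_\delta\|_{C^2}$, since $\rho^{(1)}$ is only $C^1$ under the stated hypotheses, so $\|r_\delta\|_{C^2}$ is not controlled. The repair is simple and uses only ingredients you already have: do not split $\mathcal L_\delta r_\delta$. Subtracting the fixed-point equations directly gives
$$
(I-\mathcal L_\delta)r_\delta = (\mathcal L_\delta-\mathcal L_0-\delta\dot{\mathcal L}_0)\rho_0 + \delta(\mathcal L_\delta-\mathcal L_0)\rho^{(1)},
$$
where the right-hand side now involves only the two \emph{fixed} functions $\rho_0\in C^2$ and $\rho^{(1)}\in C^1$; each term is $o_{C^1}(\delta)$ by uniform continuity of the relevant top derivatives, and the uniform invertibility of $I-\mathcal L_\delta$ on zero-average $C^1$ functions (the Keller--Liverani input above) then gives $\|r_\delta\|_{C^1}=o(\delta)$. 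The same rearrangement is what you need at an arbitrary base point; and note that your closing claim about continuity of the derivative does not follow from ``constants depending continuously on $\delta$'' --- it requires showing $\rho^{(1)}_\delta\to\rho^{(1)}_{\delta_0}$ via the resolvent identity with the same two-space bookkeeping.
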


Another reformulation of the response problem would be to consider the integrals of a fixed smooth function  with respect to the varying natural  measure (see e.g. \cite{Ba2}).

 A more general  statement of Lemma \ref{pert} appears as Theorem 20 in \cite{PJ}.    The proof is omitted, but it is a standard approach using the Implicit Function Theorem for Banach manifolds.
With this notation we  can now formulate the basic problem that we want to address: 
\begin{problem} \label{pr1}{\it What  first order change $\epsilon(x)$ in $T_0$ will result in a given first order change $\rho^{(1)}$ in the density?}
\end{problem}

Before addressing this question,  we   recall some general results about linear response of systems under small perturbations.     For hyperbolic systems (including Anosov diffeomorphisms and flows) we have the luxury that this class is open which ensures that under small perturbations the system remains hyperbolic.
Moreover, such systems are structurally stable which makes it easier to apply ideas from thermodynamic formalism \cite{Contreras}.
In this context Ruelle formulated an explicit expression for the derivative of the SRB measure \cite{R} in terms of the perturbation of the system,  which was subsequently reproved using a different method in \cite{L2} \cite{GL} and \cite{Butterley-Liverani}.   
Linear response results nowadays have been proved for several classes of dynamical systems, even outside of these  uniformly expanding and hyperbolic and structurally stable  settings, see for example  \cite{DD}, \cite{BS},\cite{Ba3},\cite{BaS},\cite{Ko} and the survey \cite{Ba2}.

\noindent {\bf  Acknowledgements.} The authors thank The Leverhulme Trust for
support through Network Grant IN-2014-021.

\section{Linear response for expanding maps}

A standard tool used  in  characterising   the invariant  densities for expanding maps  is the transfer operator.  In this section we recall the definition of the transfer operator and we  present a  theorem describing  the linear response for operators satisfying certain assumptions. This tool will be directly applied to get the linear response formula for expanding maps.

\begin{definition}
Let $\mathcal  L_\delta: L^1(X, \mathbb R) \to L^1(X, \mathbb R)$ be the transfer operators associated to an expanding map $T_\delta$, defined by 
$$
\mathcal  L_\delta w(x) = \sum_{i=1}^d \frac{w(y_i^\delta)}{T_\delta'(y_i^\delta)}
$$
where the summation is over the $d$ pre-images $\{y_i^\delta\}_{i=1}^d := T_\delta^{-1}(x)$ of $x\in X$.
\end{definition}

The following is a  classical result but can be also taken as a definition of the invariant density.
\begin{definition}\label{density}
An  invariant density  $\rho_\delta $  for  $T_\delta$ is a fixed point for the operators  $\mathcal L_\delta$ acting on $L^1( X,\mathbb R)$ (i.e., $\mathcal  L_\delta \rho_\delta = \rho_\delta $).
\end{definition}

Now let us take  a general point of view and describe a general result on linear response in terms of  fixed points of operators.
Let us consider the action of these  operators $\mathcal{L}_{\epsilon }$ on different spaces. Let $B_{w},B_{s},B_{ss}$ denote abstract Banach spaces of Borel measures on $X$ equipped with norms $ ||~||_{w},||~||_{s},||~||_{ss}$ respectively, such that $||~||_{w}\leq
||~||_{s}\leq ||~||_{ss}$. We suppose that $\mathcal{L}_{\delta },$ $\delta \geq 0,$ has a unique
fixed point $h_{\delta }\in B_{ss}.$ Let $\mathcal{L:=L}_{0}$ be the
unperturbed operator and $h\in B_{ss}$ be its invariant measure. Let us consider the space of zero average measures $$
V_{s}^{0}=\{v\in B_{s},v(X)=0\}.$$

Following an approach from \cite{L2},     we present a general setting in which differentiable dependence  and {\em a formula for the derivative of the physical measure}  of a family of positive operators $\mathcal{L}_{\delta }$ can be obtained (see \cite{BGN} for a proof of the statement in this form).  
We need to consider several norms for our operators, let us denote
  $$
  \begin{aligned}
 & ||%
\mathcal{L}_{\delta }^k h||_{B_{w}\rightarrow B_{w}}   = \sup_{\| h \|_{w} \leq 1 } \|\mathcal{L}_{\delta }^k h\|_{B_w}
\cr
&||%
\mathcal{L}_{\delta }^k h||_{B_{s}\rightarrow B_{w}}   = \sup_{\| h \|_{s} \leq 1 } \|\mathcal{L}_{\delta }^k h\|_{B_w}
\hbox{ and }
&||%
\mathcal{L}_{\delta }^k h||_{V_{s}^0\rightarrow B_{s}}   = \sup_{\| h \|_{V_s^0} \leq 1 } \|\mathcal{L}_{\delta }^k h\|_{B_w}.
\end{aligned}
$$

The result we  require is the following.

\begin{thm}
\label{perturbation} Suppose that the following assumptions hold:

\begin{enumerate}
\item The norms $||\mathcal{L}_{\delta }^{k}||_{B_{w}\rightarrow B_{w}}$ are uniformly
bounded with respect to $k$ and $\delta \geq 0$.

\item $\mathcal{L}_{\delta }$ is a perturbation of $\mathcal{L}$ in the
following sense: there is a constant $C$ independent of $\delta$ such that
\begin{equation}
||\mathcal{L}_{\delta }-\mathcal{L}||_{B_{s}\rightarrow B_{w}}\leq
C\delta .  \label{approx}
\end{equation}

\item The operators $\mathcal{L}_{\delta }$, have uniform rate of contraction on $V_{s}^{0}$: there are $C_{1}>0$, $0<\rho <1$, such that $\forall \delta \in [0,1]$ 
\begin{equation}
||\mathcal{L}_{\delta }^{n}||_{V_{s}^{0}\rightarrow B_{s}}\leq C_{1}\rho
^{n}.  \label{unicontr}
\end{equation}

\item There is an operator $\mathbb {L}:B_{ss}\rightarrow B_{s}$ such
that $\forall f\in B_{ss}$
\begin{equation}
\lim_{\delta \rightarrow 0}||\delta ^{-1}(\mathcal{L}-\mathcal{L}%
_{\delta })f-\mathbb{L}f||_{s}=0.  \label{Lhat}
\end{equation}%
Let 
\begin{equation}\label{LR}
\hat{h}=(Id-\mathcal{L})^{-1}\mathbb {L}h.
\end{equation}%
Then%
\begin{equation*}
\lim_{\delta\rightarrow 0}||\delta ^{-1}(h-h_{\delta })-\hat{h}%
||_{B_w}=0;
\end{equation*}%
i.e. $\hat{h}$ represents the derivative of $h_{\delta }$ for small
increments of $\delta $.
\end{enumerate}
\end{thm}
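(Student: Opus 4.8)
The strategy rests on converting the two fixed-point equations into a single resolvent identity and then comparing the resolvents of $\mathcal{L}_\delta$ and $\mathcal{L}$ on the space of zero-average measures.

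First I would record the algebraic identity coming from $\mathcal{L}_\delta h_\delta = h_\delta$ and $\mathcal{L}h = h$. Writing $\mathcal{L}h - \mathcal{L}_\delta h_\delta = (\mathcal{L} - \mathcal{L}_\delta)h + \mathcal{L}_\delta(h - h_\delta)$ gives
$$(Id - \mathcal{L}_\delta)(h - h_\delta) = (\mathcal{L} - \mathcal{L}_\delta)h.$$
Since the operators preserve total mass, both $h - h_\delta$ and $(\mathcal{L}-\mathcal{L}_\delta)h$ lie in $V_s^0$, and on $V_s^0$ assumption (3) makes the Neumann series $(Id-\mathcal{L}_\delta)^{-1} = \sum_{n\ge 0}\mathcal{L}_\delta^n$ converge in the $V_s^0 \to B_s$ norm with a bound $C_1/(1-\rho)$ uniform in $\delta$ (the case $\delta = 0$ simultaneously shows that $\hat h = (Id-\mathcal{L})^{-1}\mathbb{L}h$ is well defined, because $\mathbb{L}h \in V_s^0$). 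Inverting yields
$$\delta^{-1}(h - h_\delta) = (Id - \mathcal{L}_\delta)^{-1} g_\delta, \qquad g_\delta := \delta^{-1}(\mathcal{L} - \mathcal{L}_\delta)h,$$
so that, with $g := \mathbb{L}h$, the quantity to estimate is $(Id-\mathcal{L}_\delta)^{-1}g_\delta - (Id-\mathcal{L})^{-1}g$.

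I would split this into $(Id-\mathcal{L}_\delta)^{-1}(g_\delta - g)$ and $[(Id-\mathcal{L}_\delta)^{-1} - (Id-\mathcal{L})^{-1}]g$. The first term is harmless: assumption (4) gives $\|g_\delta - g\|_s \to 0$, and the uniform $V_s^0 \to B_s$ bound on $(Id-\mathcal{L}_\delta)^{-1}$ controls it, so it tends to $0$ in $B_s$ and a fortiori in $B_w$. For the second term I would use the second resolvent identity
$$(Id-\mathcal{L}_\delta)^{-1} - (Id-\mathcal{L})^{-1} = (Id-\mathcal{L}_\delta)^{-1}(\mathcal{L}_\delta - \mathcal{L})(Id-\mathcal{L})^{-1},$$
so that, setting $w := (Id-\mathcal{L})^{-1}g \in V_s^0$ (a fixed vector independent of $\delta$), the term equals $(Id-\mathcal{L}_\delta)^{-1}(\mathcal{L}_\delta - \mathcal{L})w$.

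Here lies the main obstacle. Assumption (2) only controls $(\mathcal{L}_\delta - \mathcal{L})w$ in the weak norm, $\|(\mathcal{L}_\delta - \mathcal{L})w\|_w \le C\delta\|w\|_s$, whereas the uniform bound on $(Id-\mathcal{L}_\delta)^{-1}$ requires strong-norm control of its argument, so one cannot simply compose the two. The resolution is a Lasota--Yorke-type head/tail splitting of the Neumann series $(Id-\mathcal{L}_\delta)^{-1} u = \sum_{n\ge 0}\mathcal{L}_\delta^n u$ applied to $u = (\mathcal{L}_\delta-\mathcal{L})w$. For the first $N$ terms I use the uniform $B_w \to B_w$ bound of assumption (1), giving a contribution of order $N\delta\|w\|_s$; for the tail $n \ge N$ I use the contraction of assumption (3) together with $\|u\|_s \le 2C_1\rho\|w\|_s$ (from applying (3) with $n=1$ to $\mathcal{L}_\delta w$ and $\mathcal{L}w$), giving a contribution of order $\rho^N\|w\|_s$. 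Choosing $N = N(\delta) \to \infty$ with $N\delta \to 0$ (for instance $N \sim \log(1/\delta)$) sends both contributions to $0$. Combining the two terms gives $\|\delta^{-1}(h - h_\delta) - \hat h\|_w \to 0$, which is the desired conclusion.
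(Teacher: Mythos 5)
Your proof is correct. Note that the paper does not contain its own proof of Theorem \ref{perturbation}: the statement is quoted with a pointer to \cite{BGN}, and your argument is essentially the one given there --- the fixed-point identity $(Id-\mathcal{L}_{\delta})(h-h_{\delta})=(\mathcal{L}-\mathcal{L}_{\delta})h$, uniform invertibility of $Id-\mathcal{L}_{\delta}$ on $V_{s}^{0}$ via the Neumann series and assumption (3), the splitting into the assumption-(4) term plus the resolvent difference, and the head/tail decomposition with $N\sim\log(1/\delta)$ using assumptions (1)--(3) to overcome the fact that assumption (2) controls the perturbation only in the weak norm. The one point worth flagging is that your zero-average claims (that $h-h_{\delta}$, $(\mathcal{L}-\mathcal{L}_{\delta})h$ and $\mathbb{L}h$ lie in $V_{s}^{0}$, and that $\mathcal{L}_{\delta}$ preserves $V_{s}^{0}$ so that the Neumann series makes sense) rest on the operators preserving total mass, which is not listed among the abstract hypotheses; it is, however, automatic in the paper's intended setting, where the $\mathcal{L}_{\delta}$ are transfer operators of maps, and is implicitly assumed in the statement.
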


Let us see how to apply Theorem \ref{perturbation} to our setting. The assumptions of the above  theorem are valid for the  perturbations of circle expanding maps we are considering with  the choices of spaces: 
\begin{enumerate}
\item 
$B_{w}=L^1(X)$ with the norm $\|f \|_{L^1} = \int_X |f(x)| dx$; 
\item
$B_{s}={W^{1,1}}$ with the norm $\|f \|_{W^{1,1}} =   \int_X |f(x)| dx + \int_X |f'(x)| dx$; 
\item 
$B_{ss}=C^2(X)$ with the norm $\|f \|_{C^2(X)} =  \|f\|_\infty + \|f'\|_\infty +  \|f''\|_\infty$
where $\|f\|_\infty = \sup_{x\in X} |f(x)|$.
\end{enumerate}

In this context, item $1)$ of Theorem \ref{perturbation} is trivial on $L^1$ as the transfer operators are weak contractions.  Items  $2)$ and  $3)$ of Theorem \ref{perturbation} are proved for example in \cite{G}, Section 6.
  The existence of the operator $\mathbb {L}$,  and an explicit formula  for it, will be proved in the next section (see Proposition \ref{mainprop}).
From this  follows the differentiability of the physical measure and the  linear response formula (\ref{LR}) for our  family of expanding maps.


\section{The derivative operator for circle expanding maps}

In this section we present  a detailed description of the structure of the operator $\mathbb L $ in our case.

\begin{proposition}\label{mainprop}
Let $T_\delta $ be a family of expanding maps as considered before.
Let $w \in C^3(X, \mathbb R)$.
For each $x \in X$ we can write 
\begin{equation}
\begin{aligned}
\mathbb L w(x) &= 
\lim_{\delta \to 0} \left(
\frac{\mathcal  L_\delta  w (x) - \mathcal  L_0  w(x) }{\delta}\right)\cr
&=
-\mathcal L_0\left(\frac{w\epsilon'}{T_0'}\right)(x) - 
 \mathcal L_0\left(\frac{ \epsilon w'}{T_0'}\right)(x)  + \mathcal L_0 \left(\frac{\epsilon T_0''}{(T_0')^2}  w \right)(x) 
 \end{aligned}
 \end{equation}
 and the convergence is also in the $C^1$ topology.
\end{proposition}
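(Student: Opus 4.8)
The plan is to differentiate the finite sum defining $\mathcal{L}_\delta$ branch by branch, treating each inverse branch as an implicitly defined function of $x$ depending on $\delta$. Denote by $y_i^\delta = y_i^\delta(x)$ the $i$-th inverse branch, characterised by $T_\delta(y_i^\delta(x)) = x$. Because $T_0$ is $C^4$ and each $T_\delta$ is $C^3$ and expanding, with $T_\delta' \ge \lambda > 1$ uniformly, the implicit function theorem applied to $F(\delta,x,y) = T_\delta(y) - x$, whose partial derivative $\partial_y F = T_\delta'(y)$ never vanishes, shows that for $\delta$ small each $y_i^\delta(x)$ is well defined, is $C^3$ in $x$, and is differentiable in $\delta$. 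Since there are exactly $d$ branches and the sum is finite, termwise differentiation in $\delta$ is legitimate, and it remains to identify the limit and to upgrade the convergence to the $C^1$ topology.

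First I would compute the $\delta$-derivative of the branches. Differentiating the identity $T_\delta(y_i^\delta(x)) = x$ in $\delta$ at $\delta = 0$ and using $\partial_\delta T_\delta|_{\delta=0} = \epsilon$ gives, with $y_i := y_i^0(x)$,
$$
\epsilon(y_i) + T_0'(y_i)\,\dot y_i = 0, \qquad \dot y_i := \partial_\delta y_i^\delta\big|_{\delta = 0} = -\frac{\epsilon(y_i)}{T_0'(y_i)}.
$$
Next, writing $\mathcal{L}_\delta w(x) = \sum_{i=1}^d w(y_i^\delta)/T_\delta'(y_i^\delta)$ and differentiating each summand by the chain and quotient rules, noting $\partial_\delta T_\delta'|_{\delta=0} = \epsilon'$ and $\partial_\delta T_\delta''|_{\delta=0}$ feeds in through $T_0''$, I obtain
$$
\mathbb{L}w(x) = \sum_{i=1}^{d} \left[ \frac{w'(y_i)\,\dot y_i}{T_0'(y_i)} - \frac{w(y_i)}{(T_0'(y_i))^{2}}\bigl(\epsilon'(y_i) + T_0''(y_i)\,\dot y_i\bigr) \right].
$$
Substituting the expression for $\dot y_i$ and collecting the three resulting terms, each of the form $\sum_i (\,\cdot\,)(y_i)/T_0'(y_i)$, recognises them as $\mathcal{L}_0$ applied to $\epsilon w'/T_0'$, to $w\epsilon'/T_0'$, and to $\epsilon T_0'' w/(T_0')^2$ respectively, with signs $-,-,+$, which is exactly the claimed formula. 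This part is a routine, if slightly lengthy, calculation.

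The main obstacle is upgrading this pointwise computation to convergence in the $C^1$ topology. For this I would not differentiate under the limit but instead expand $\mathcal{L}_\delta w(x) - \mathcal{L}_0 w(x)$ by a first-order Taylor expansion in $\delta$ with explicit remainder, controlling that remainder uniformly in $x$ together with its first $x$-derivative. The ingredients are the uniform expansion bound $T_\delta' \ge \lambda > 1$, so all denominators stay bounded away from zero; the uniform boundedness of $w,\epsilon,T_0',T_0''$ and of the branches $y_i^\delta$ in $C^2$; and the hypothesis $\|(T_\delta - T_0)/\delta - \epsilon\|_{C^3} \to 0$, which controls the error in approximating both $\dot y_i$ and $\partial_\delta T_\delta'$ together with their $x$-derivatives. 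Since the sum is finite, these uniform estimates combine to give $\|\delta^{-1}(\mathcal{L}_\delta w - \mathcal{L}_0 w) - \mathbb{L}w\|_{C^1} \to 0$. The reason $C^3$ regularity of the maps is needed appears precisely here: differentiating the difference quotient once in $x$ brings in second derivatives of $T_\delta$, and controlling that derivative uniformly consumes the full $o_{C^3}(\delta)$ control on the family.
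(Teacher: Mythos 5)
Your proposal is correct and follows essentially the same route as the paper: the key input in both is the derivative $\dot y_i = -\epsilon(y_i^0)/T_0'(y_i^0)$ of the inverse branches (the paper's Lemma \ref{tec}), followed by branchwise differentiation of $w(y_i^\delta)/T_\delta'(y_i^\delta)$ and uniform Taylor-remainder estimates (using the $o_{C^3}(\delta)$ hypothesis and the uniform expansion bound) to get convergence in $C^1$, which is exactly how the paper organises its terms $(I)$, $(II)$, $(III)$. One small caution: since the family is only assumed differentiable in $\delta$ at $\delta=0$, the implicit function theorem in the variable $\delta$ is not literally available, but this does not matter because your fallback plan --- expanding the difference quotient with explicit remainders controlled by $\|(T_\delta-T_0)/\delta-\epsilon\|_{C^3}\to 0$ --- is precisely what the paper does (and is how Lemma \ref{tec} is proved there), and it establishes the branch expansion without any a priori differentiability in $\delta$.
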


Before presenting  the proof of Proposition \ref{mainprop} we state  a simple  lemma.

\begin{lemma}\label{tec}
If $y_i^\delta \in T^{-1}_\delta(x)$ then we can expand 
$$y_i^\delta = y_i^0 + \delta \left( - \frac{\epsilon(y_i^0)}{T'_0 (y_i^0)}\right)
+ o_{C^2}(\delta).$$
\end{lemma}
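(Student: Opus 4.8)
The plan is to treat the identity $T_\delta(y_i^\delta)=x$, which holds for all $x$, as an implicit equation defining the inverse branch $y_i^\delta=y_i^\delta(x)$, and to extract its first-order expansion in $\delta$ by substituting an ansatz and matching powers of $\delta$. Since $T_0$ is expanding we have $|T_0'|\geq\lambda>1$, so on each branch $T_0$ is a $C^3$ diffeomorphism with derivative bounded away from zero; the same holds uniformly for $T_\delta$ once $\delta$ is small. Consequently each inverse branch $y_i^\delta(\cdot)$ is $C^3$ in $x$, with $C^3$ norm bounded uniformly in $\delta$. This uniform bound is what will let me upgrade a pointwise estimate into an estimate in the $C^2$ topology.

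First I would write $y_i^\delta = y_i^0 + \delta u_i + R_\delta^i$, where $u_i$ is to be determined and $R_\delta^i$ is a remainder. Using the hypothesis $T_\delta = T_0 + \delta\epsilon + o_{C^3}(\delta)$ together with Taylor's theorem for $T_0$ about $y_i^0$, I expand
$$ T_\delta(y_i^\delta) = T_0(y_i^0) + \big(T_0'(y_i^0)\,u_i + \epsilon(y_i^0)\big)\delta + (\text{higher-order terms}). $$
Since $T_0(y_i^0)=x=T_\delta(y_i^\delta)$, the constant term already matches; forcing the coefficient of $\delta$ to vanish gives $T_0'(y_i^0)\,u_i + \epsilon(y_i^0)=0$, that is
$$ u_i = -\frac{\epsilon(y_i^0)}{T_0'(y_i^0)}, $$
which is precisely the claimed first-order term.

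It then remains to show $\|R_\delta^i\|_{C^2}=o(\delta)$. Solving the expanded relation for $R_\delta^i$ and using that $T_0'(y_i^0)$ is bounded below by $\lambda>1$, I would express $R_\delta^i$ in terms of the genuinely higher-order pieces: the $o_{C^3}(\delta)$ error in the hypothesis on $T_\delta$ and the quadratic Taylor remainder of $T_0$, both evaluated along the $C^3$-bounded family of inverse branches. The uniform $C^3$ bound on $y_i^\delta(\cdot)$ controls the first two $x$-derivatives of these composite error terms uniformly in $x$, which yields the $o(\delta)$ bound in the $C^2$ norm rather than merely pointwise.

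The main obstacle is exactly this last step: identifying the coefficient $u_i$ is a routine implicit-differentiation computation, but promoting the remainder estimate from pointwise in $x$ to the $C^2$ topology requires the uniform-in-$\delta$ control of the inverse branches in $C^3$ and some care in differentiating the composed error terms twice. Equivalently, one may phrase the whole argument as an application of the Implicit Function Theorem to $\Phi(g,\delta)(x)=T_\delta(g(x))-x$, acting between the Banach space of $C^3$ inverse branches and $C^2$ functions, the invertibility of $\partial_g\Phi$ at $\delta=0$ being guaranteed by $|T_0'|\geq\lambda>1$; computing the derivative of the resulting inversion operation reproduces the same formula for $u_i$.
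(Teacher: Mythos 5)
Your proposal is correct and follows essentially the same route as the paper's proof: substitute the ansatz $y_i^\delta = y_i^0 + \delta u_i + R_\delta^i$ into $T_\delta(y_i^\delta)=x$, expand using $T_\delta = T_0 + \delta\epsilon + o_{C^3}(\delta)$ and Taylor's theorem for $T_0$, match the first-order coefficient to get $u_i = -\epsilon(y_i^0)/T_0'(y_i^0)$, and bound the leftover remainder using that $T_0'$ is bounded away from zero. Your extra care about uniform $C^3$ control of the inverse branches to justify the $o_{C^2}(\delta)$ estimate fills in exactly the step the paper compresses into ``from which the result follows,'' so it is a welcome refinement rather than a different argument.
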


\begin{proof}[Proof of Lemma \ref{tec}]
We denote by 
$\{y_i^\delta\}_{i=1}^d := T_\delta^{-1}(x)$ and $\{y_i^0\}_{i=1}^d := T_0^{-1}(x)$ the $d$ preimages under $T_\delta$ and $T_0$, respectively, of a point $x\in X$.
Let us write  
$$y_i^\delta(x) = y^0_i(x) + \delta \epsilon_i(x) + F_i(\delta,x).$$
We will show that $F_i(\delta,x) =  o_{C^2}(\delta)$. 
  Substituting  this into the identity $T_\delta (y_i^\delta(x) ) = x$ and 
and using that
$T_\delta(x) = T_0(x) + \delta \epsilon(x) + o_{C^3}(\delta)$ we can 
expand  
\begin{equation}
\label{long}
\begin{aligned}
x& = T_\delta( y_i^\delta(x) ) \cr
&= 
T_0( y_i^\delta(x)) + \delta \epsilon(y_i^\delta(x)) +  E(\delta, y_i^\delta(x) ) \cr
&=
T_0( y_i^0(x) + \delta \epsilon_i(x) + F_i(\delta,x)) \cr
& \  \  \  +\delta\epsilon(y_i^0(x)  +  \delta \epsilon_i(x) + F_i(\delta,x)) + E(\delta, y_i^\delta(x) ).
\end{aligned}
\end{equation}
We can write the first term in the final line of (\ref{long}) as 
$$
\begin{aligned}
&T_0( y_i^0(x) + \delta \epsilon_i (x) + F_i(\delta,x)) \cr
&= T_0( y_i^0(x)) + T_0'( y_i^0(x))(\delta \epsilon_i (x) + F_i(\delta,x)) + o_{C^2}(\delta)
\end{aligned}
$$
and the second term in the last line as
$$\delta\epsilon(y_i^0(x) + \delta \epsilon_i(x) + F_i(\delta,x))
= \delta\epsilon(y_i^0(x)) +\delta F_i(\delta,x)+ o_{C^2}(\delta)$$
and use that $T_0( y_i^0(x)) = x$ to cancel terms on either side of (\ref{long})  to get that 
$$0 = T_0'( y_i^0(x))(\delta \epsilon_i(x) + F_i(\delta,x))
+ \delta\epsilon(y_i^0(x)) +\delta F_i(\delta,x)
+ E(\delta, y_i^\delta(x) ) + o_{C^2}(\delta).$$
Thus we can  identify the first order terms as $\delta  T_0'( y_i^0(x)) \epsilon_i(x)  + \delta \epsilon(y_i^0(x))$ and then what is left is 
$$T_0'( y_i^0(x))F_i(\delta,x) +\delta F_i(\delta,x)= -E(\delta, y_i^\delta(x) ) + o_{C^2}(\delta)$$
from which the result follows.
\end{proof}

We now return to the proof of Proposition  \ref{mainprop}.

\begin{proof}[Proof of Proposition  \ref{mainprop}]  We again denote by 
$\{y_i^\delta\}_{i=1}^d := T_\delta^{-1}(x)$ and $\{y_i^0\}_{i=1}^d := T_0^{-1}(x)$ the $d$ preimages under $T_\delta$ and $T_0$, respectively, of a point $x\in X$.  Futhermore, we assume that the indexing is chosen so that $y^\delta_i$ is a perturbation of  $y^0_i$, for $1 \leq i \leq d$.
We can write 
\begin{equation}\label{eq1}
\begin{aligned}
&\frac{\mathcal  L_\delta  w (x) - \mathcal  L_0  w(x) }{\delta}\cr
&=
\frac{1}{\delta}
\left(
\sum_{i=1}^d \frac{w(y_i^\delta)}{T_\delta'(y_i^\delta)}
-  \sum_{i=1}^d\frac{w(y_i^0)}{T_0'(y_i^0)}
\right)\cr
&=\underbrace{ \frac{1}{\delta}
\left(
\sum_{i=1}^d w(y_i^\delta)
 \left(
 \frac{1}{T_\delta'(y_i^\delta)}
-   \frac{1}{T_0'(y_i^\delta)}\right)
\right)}_{=:(I)}
+ \underbrace{\frac{1}{\delta}
\left(
 \sum_{i=1}^d\frac{w(y_i^\delta) - w(y_i^0) }{T_0'(y_i^\delta)}
\right)}_{=:(II)}\cr
&+
\underbrace{
\frac{1}{\delta}
\left(
 \sum_{i=1}^dw(y_i^0)\left(  \frac{1}{T_0'(y_i^\delta)}- \frac{1}{T_0'(y_i^0)}\right)
\right)}_{=:(III)}.\cr
\end{aligned}
\end{equation}
 For the first term we first differentiate the expansion $T_\delta (x) = T_0(x) + \delta \epsilon(x) + o_{C^3}(\delta)$
in $x$ to get:
$$
T_\delta' (x) = T_0'(x) + \delta \epsilon'(x) + o_{C^2}(\delta).
$$
We  can then write 
$$
\begin{aligned}
(I) &=  \frac{1}{\delta}
\left(
\sum_{i=1}^d w(y_i^\delta)
 \left(
 \frac{1}{T_\delta'(y_i^\delta)}
-   \frac{1}{T_0'(y_i^\delta)}\right)
\right)\cr
&=
 \frac{1}{\delta}
\left(
\sum_{i=1}^d 
 \frac{w(y_i^\delta)}{T_\delta'(y_i^\delta)}
 \left(
 1
-   \frac{T_\delta'(y_i^\delta)}{T_0'(y_i^\delta)}\right)
\right)\cr
&=
 \frac{1}{\delta}
\left(
\sum_{i=1}^d 
 \frac{w(y_i^\delta)}{T_\delta'(y_i^\delta)}
 \left(
 1
-  \left( \frac{T_0'(y_i^\delta) + \delta \epsilon'(y_i^\delta) + o_{C^2}(\delta)}{T_0'(y_i^\delta)}\right)\right)
\right)\cr
&=
\left(
-\sum_{i=1}^d 
 \frac{w(y_i^\delta)\epsilon'(y_i^\delta)}{T_\delta'(y_i^\delta)T_0'(y_i^\delta)}\right)
+ o_{C^2}(1).
\end{aligned}
$$
Thus  we have that 
$$
\begin{aligned}
\lim_{\delta \to 0}  \frac{1}{\delta}
\left(
\sum_{i=1}^d w(y_i^\delta)
 \left(
 \frac{1}{T_\delta'(y_i^\delta)}
-   \frac{1}{T_0'(y_i^\delta)}\right)
\right)
&= 
\lim_{\delta \to 0}
\left(
-\sum_{i=1}^d 
 \frac{w(y_i^\delta)\epsilon'(y_i^\delta)}{T_0'(y_i^\delta)^2}\right) \cr
 &= -\mathcal L_0\left(\frac{w \epsilon'}{T'_0}\right)
 \end{aligned}
$$
and the limit converges in $C^1$.
For the second term  of (\ref{eq1}) we can use Lemma \ref{tec} to  write
$$
\begin{aligned}
w(y_i^\delta) &= w(y_i^0) +  w'(y_i^0)  \left(\frac{dy_i^\delta}{d\delta}|_{\delta=0} \right)\delta + o_{C^1}(\delta)\cr
&= w(y_i^0) +  w'(y_i^0) \left( - \frac{\epsilon(y_i^0)}{T'_{0}(y_i^0)}\right)
\delta + o_{C^1}(\delta).
\end{aligned}
$$
Thus 
$$
\begin{aligned}
(II) = \frac{1}{\delta} \sum_{i=1}^d\frac{w(y_i^\delta) - w(y_i^0) }{T_0'(y_i^\delta)}
 &= 
 \frac{1}{\delta} \sum_{i=1}^d\frac{w'(y_i^0) }{T_0'(y_i^\delta)}  \left( - \frac{\epsilon(y_i^0)}{T_0'(y_i^0)}\right) + o_{C^1}(1)\cr
   &= 
 -  \sum_{i=1}^d\frac{\epsilon(y_i^0)w'(y_i^0) }{T_0'(y_i^0)T_0'(y_i^\delta)} + o_{C^1}(1)
   \end{aligned}
$$
and therefore, both pointwise and in the $C^1$ topology
$$
\lim_{\delta \to 0}  \frac{1}{\delta}
 \sum_{i=1}^d\frac{w(y_i^\delta) - w(y_i^0) }{T_0'(y_i^\delta)}
 =  - \mathcal L_0\left(\frac{ \epsilon w'}{T'_0}\right)(x).
$$
Finally, for the third term we can write 
$$
\begin{aligned}
T_0'(y_i^\delta) & = T_0'(y_i^0) + T_0''(y_i^0)\left( \frac{dy^\delta_i}{d\delta}|_{\delta=0}\right) \delta  + o_{C^1}(\delta)\cr
& = T_0'(y_i^0) + T_0''(y_i^0)\left( - \frac{\epsilon(y_i^0)}{T'_{0} (y_i^0)}\right)\delta  + o_{C^1}(\delta), 
\end{aligned}
$$
again using the Lemma \ref{tec}.  
Therefore 
$$
\begin{aligned}
(III) = &\frac{1}{\delta}
\left(
 \sum_{i=1}^dw(y_i^0)\left(  \frac{1}{T_0'(y_i^\delta)}- \frac{1}{T_0'(y_i^0) }\right)
\right)\cr
&= 
\frac{1}{\delta}
\left(
 \sum_{i=1}^d w(y_i^0)\left(  \frac{T_0'(y_i^0) - T_0'(y_i^\delta)}{T_0'(y_i^\delta) T_0'(y_i^0)}\right)
\right)\cr
&= 
\frac{1}{\delta}
\left(
 \sum_{i=1}^dw(y_i^0)\left(  \frac{-\left(T_0'(y_i^0) + T_0''(y_i^0)\left( - \frac{\epsilon(y_i^0)}{T'_0(y_i^0)}\right)\delta\right) + T_0'(y_i^0)}{T_0'(y_i^\delta) T_0'(y_i^0)}\right) \right) + o_{C^1}(1)
\cr
&=\frac{1}{\delta}
\left(
  \sum_{i=1}^dw(y_i^0)\left(  \frac{\epsilon(y_i^0) T_0''(y_i^0)}{T_0'(y_i^0)^2 T_0'(y_i^\delta)}\right)\
\right) + o_{C^1}(1)\cr
\end{aligned}
$$
and thus, finally,  
$$
\lim_{\delta \to 0} \frac{1}{\delta}
\left(
 \sum_{i=1}^dw(y_i^0)\left(  \frac{1}{T_0'(y_i^\delta)}- \frac{1}{T_0'(y_i^0)}\right)
\right)
=  \mathcal L_0 \left(\frac{\epsilon T_0''}{(T_0')^2} w\right) (x) 
$$
in $C^1$.
\end{proof}

\section{The control problem}
We can now use the preceding results to address Problem \ref{pr1}, finding first order perturbations to the expanding maps corresponding to a  given first order perturbation in the density.

Putting together the information given by Theorem \ref{perturbation}   and  Proposition \ref{mainprop} in the  particular case that $w = \rho$ is the  invariant density for   $T_0: X \to X$ we  arrive at an  equation that allows us to address the question posed in Problem \ref{pr1}.   
More precisely, given the required direction of  change $\rho^{(1)} \in C^1(X, \mathbb R)$ we want to find $\epsilon(x)$ such that the associated operator $\mathbb L$ satisfies
$$
(I - \mathcal L_0) \rho^{(1)} = \mathbb L \rho(x).$$
Thus by Proposition  \ref{mainprop},  given $\rho^{(1)}$  we need to solve for $\epsilon(x)$ such that 
\begin{equation}\label{prob}
(I - \mathcal L_0) \rho^{(1)}(x) = 
 \mathcal L_0\left(-\frac{\rho \epsilon'}{T_0'}  -
\frac{ \epsilon \rho '}{T_0'}  + \frac{\epsilon T_0''}{(T_0')^2} \right)(x).
\end{equation}
The problem hence involves the solution of a differential equation for $\epsilon: X \to \mathbb R$ in the space of functions representing infinitesimal changes of the system.
We remark that given a function  $f$, the equation $f=\mathcal L_0 g $ typically may have many solutions, as $T_0$ is not bijective.

Thus in the solution of Problem \ref{pr1} there are two steps:
\begin{enumerate}
\item
Firstly we have to choose a solution
 $g$  to 
$(I - \mathcal L_0) \rho^{(1)} =  \mathcal L_0\left(g \right)$.  Typically there will be an infinite dimensional family of solutions; and  \item
Secondly, we  can solve for $\epsilon$ such that
$$g = 
-\epsilon' \left( \frac{\rho}{T_0'}\right)  - \epsilon\left(  \frac{  \rho '}{T_0'} \right) + \epsilon \left(  \frac{ T_0''}{(T_0')^2} \right). 
$$
In particular, we have to solve  a linear first order inhomogeneous  differential equation.  
\end{enumerate}

Finally, amongst the possible solutions $\epsilon(x)$ of this problem,
it  is natural to look for one which is optimal, in a suitable sense.
In particular, we will  search for a solution of minimal size with respect to some natural norm to be described later. We will prove that under suitable assumption such an optimal solution exists. 

\subsection{Existence of a solution}
Our first main conclusion about existence is the following.  

\begin{thm}\label{T1}
Let $k \geq 4$.  
For a 
$C^k$
expanding map of the circle $T_0$,  any   
$C^{k-2}$
 first order perturbation $\rho^{(1)}$ in the density can be realised by a suitable first order 
 $C^{k-1}$-perturbation $\epsilon$ in the transformation.  Moreover there is an  infinite  dimensional space of perturbations achieving this. 
\end{thm}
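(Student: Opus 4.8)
The plan is to carry out precisely the two-step reduction of Problem~\ref{pr1} recorded after equation~(\ref{prob}): first produce a $g$ with $\mathcal{L}_0 g = f$, where $f := (I-\mathcal{L}_0)\rho^{(1)}$, and then solve the first order linear differential equation of step~$2$ for $\epsilon$. I will use throughout that for a $C^k$ expanding map the invariant density $\rho$ is $C^{k-1}$ and strictly positive, that $\mathcal{L}_0$ maps $C^{k-2}$ into itself, and that it preserves integrals, $\int_X \mathcal{L}_0 g\,dx = \int_X g\,dx$. In particular $f\in C^{k-2}$ and, applying the latter identity, $\int_X f\,dx = 0$.

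For step~$1$ the key tool is the intertwining identity $\mathcal{L}_0\big((\phi\circ T_0)\,h\big) = \phi\,\mathcal{L}_0 h$, immediate from $T_0(y_i)=x$ in the definition of $\mathcal{L}_0$. Taking $h=\rho$ and $\phi=f/\rho$ (well defined and $C^{k-2}$ since $\rho$ is $C^{k-1}$ and bounded below) yields the explicit solution $g := \big((f/\rho)\circ T_0\big)\,\rho \in C^{k-2}$, for which $\mathcal{L}_0 g = (f/\rho)\,\mathcal{L}_0\rho = f$. The general solution is $g+\ker\mathcal{L}_0$, and since $d\ge 2$ the operator $\mathcal{L}_0$ is a genuine $d$-to-one weighted average, so $\ker\mathcal{L}_0$ is infinite dimensional: using two local inverse branches $v_1,v_2$ with disjoint images one spreads a smooth bump across $v_1(W)$ and $v_2(W)$ with weights arranged so that the two terms cancel in $\mathcal{L}_0 g$, and letting the bump vary gives infinitely many independent smooth kernel elements. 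Every admissible $g$ satisfies $\int_X g\,dx = \int_X \mathcal{L}_0 g\,dx = \int_X f\,dx = 0$.

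For step~$2$, specialising Proposition~\ref{mainprop} to $w=\rho$ turns the task into the ordinary differential equation $-\tfrac{\rho}{T_0'}\epsilon' + c\,\epsilon = g$, with $c = -\tfrac{\rho'}{T_0'}+\tfrac{\rho T_0''}{(T_0')^2}$. Since $\rho/T_0'$ never vanishes this is a regular first order linear equation $\epsilon' + P\epsilon = Q$ on the circle, with $P = \rho'/\rho - T_0''/T_0'$ and $Q = -g\,T_0'/\rho$, both in $C^{k-2}$. The integrating factor is $\mu = \exp\big(\int_0^x P\big)$, which is proportional to $\rho/T_0'$ and hence periodic, because $\int_0^1 P\,dx = [\log\rho-\log T_0']_0^1 = 0$. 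A periodic solution therefore exists exactly when the closing-up condition $\int_0^1 \mu Q\,dx = 0$ holds; but $\mu Q$ is proportional to $g$, so this condition is simply $\int_X g\,dx = 0$, which step~$1$ already guarantees. Bootstrapping $\epsilon' = Q - P\epsilon$ from the $C^{k-2}$ data upgrades the periodic solution to $\epsilon\in C^{k-1}$, as required.

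Finally, writing the whole problem as $A\epsilon = f$ for the linear operator $A\colon\epsilon\mapsto\mathbb{L}\rho$, the above exhibits a solution, so the solution set is the affine space $\epsilon_\ast + \ker A$. Each $g\in\ker\mathcal{L}_0$ automatically has zero integral and hence lifts through the differential equation to an element of $\ker A$; since $\ker\mathcal{L}_0$ is infinite dimensional and $\epsilon$ determines $g$ uniquely, $\ker A$ is infinite dimensional, which gives the claimed infinite dimensional family of perturbations. I expect the one genuinely delicate point to be the closing-up obstruction in step~$2$, namely whether the ODE solution is periodic; the pleasant mechanism that resolves it is that the integrating factor works out to be proportional to $\rho/T_0'$, so the obstruction collapses exactly to $\int_X g = 0$, which step~$1$ supplies for free.
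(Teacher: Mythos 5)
Your proof is correct, and it follows the same two-step skeleton as the paper (choose $g$ with $\mathcal L_0 g=(I-\mathcal L_0)\rho^{(1)}$, then solve a first order linear ODE for $\epsilon$), but both steps are executed differently, and in one place you are more careful than the paper itself. For step 1 the paper invokes its Lemma \ref{solution}, which is proved by conjugating $T_0$ via $h(x)=\int_0^x\rho(u)\,du$ to a Lebesgue-measure-preserving map $S$ and using $\mathcal L_S 1=1$; your direct use of the intertwining identity $\mathcal L_0\bigl((\phi\circ T_0)\,h\bigr)=\phi\,\mathcal L_0 h$ is shorter and in fact yields literally the same function, since the paper's solution $g=((f/\rho)\circ h^{-1}\circ S\circ h)\,\rho$ equals $((f/\rho)\circ T_0)\,\rho$. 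Your bump construction of an infinite dimensional subspace of $\ker\mathcal L_0$ matches the paper's argument. The substantive difference is step 2: the paper simply asserts that the ODE with $C^{k-2}$ coefficients has a $C^{k-1}$ family of solutions, without addressing the fact that on the circle a first order linear ODE need not admit any periodic solution. You identify and close this gap: the integrating factor is proportional to $\rho/T_0'$, hence periodic (because $\int_0^1 P\,dx=0$), so periodic solutions exist if and only if $\int_0^1\mu Q\,dx=0$, which reduces to $\int_X g\,dx=0$, and this holds automatically since $\mathcal L_0$ preserves integrals and $\int_X(I-\mathcal L_0)\rho^{(1)}\,dx=0$. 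It is worth noting that this mechanism works precisely because you used the correct specialization of Proposition \ref{mainprop} with $w=\rho$, whose third coefficient is $\epsilon\rho T_0''/(T_0')^2$: equation (\ref{prob}) of the paper drops the factor $\rho$ there (a typo), and with that coefficient the cancellation $\int_0^1 P\,dx=0$ would fail in general. So your write-up is not only correct but supplies a periodicity argument that the paper leaves implicit.
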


Before giving the proof of Theorem \ref{T1} we state a   lemma regarding the solutions of $f=\mathcal{L}_0 g$. 
In the setting of   expanding maps of  the circle this takes a simple form.
 Let $C^{k-1}(X, \mathbb R)$ be the Banach space of $C^{k-1}$ functions on the unit circle $X$, for $k \geq 1$.  

%

Let $T: X \to X$ be a $C^k$ expanding map of the circle $X$.  It is known that such maps have a unique  $C^{k-1}$ invariant density.
Let $h: X \to X$ be an orientation preserving  $C^{k}$ diffeomorphism.  We can define a new map $S: X \to X$ by conjugation $S = h\circ T \circ h^{-1}$.   

We can define  transfer operators 
$\mathcal L_S:   C^{k-1}(X) \to C^{k-1}(X)$ 
and $\mathcal L_T:   C^{k-1}(X) \to C^{k-1}(X)$ 
by 
$$
\mathcal L_Tw(x) = \sum_{Ty = x}  \frac{w(y)}{T'(y)}
\hbox{ and }
\mathcal L_Sw(x) = \sum_{Sy = x}  \frac{w(y)}{S'(y)}.
$$

\begin{lemma}\label{L1}
For each  orientation preserving  $C^{k}$ diffeomorphism $h$ we can write
$(\mathcal L_Sw)\circ h  =  \frac{1}{h'} \mathcal L_T \left((w\circ h) h'\right)$.
\end{lemma}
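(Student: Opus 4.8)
The plan is to prove the identity by a direct computation, setting up a dictionary between the preimages of a point under $S$ and those under $T$ via the conjugating diffeomorphism $h$, and then tracking the derivative factors using the chain rule. There is no fixed-point theory or functional-analytic input needed here: the statement is purely an algebraic change-of-variables identity for transfer operators under smooth conjugacy.

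First I would fix $x \in X$ and evaluate the left-hand side at $h(x)$, so that $(\mathcal L_S w)\circ h$ at $x$ equals $\sum_{Sy = h(x)} w(y)/S'(y)$. The key observation is the correspondence between preimages: since $S = h\circ T \circ h^{-1}$, the equation $Sy = h(x)$ is equivalent to $T(h^{-1}(y)) = x$, so writing $z = h^{-1}(y)$ turns the sum over $\{y : Sy = h(x)\}$ into a sum over $\{z : Tz = x\}$ with $y = h(z)$. Because $h$ is a bijection this reindexing is exact, and since $h$ is assumed orientation preserving, no issues with signs or multiplicities arise.

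Next I would compute the derivative factor. Differentiating $S = h\circ T\circ h^{-1}$ by the chain rule gives $S'(y) = h'(T(h^{-1}(y)))\, T'(h^{-1}(y))\, (h^{-1})'(y)$, and using $(h^{-1})'(y) = 1/h'(h^{-1}(y))$ together with $z = h^{-1}(y)$ and $Tz = x$ yields $S'(h(z)) = h'(x)\, T'(z)/h'(z)$. Substituting into the reindexed sum gives
\[
(\mathcal L_S w)(h(x)) = \sum_{Tz=x} \frac{w(h(z))\, h'(z)}{h'(x)\, T'(z)} = \frac{1}{h'(x)} \sum_{Tz=x} \frac{\big((w\circ h)\, h'\big)(z)}{T'(z)},
\]
which by the definition of $\mathcal L_T$ is exactly $\frac{1}{h'(x)}\,\mathcal L_T\big((w\circ h) h'\big)(x)$. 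Since $x$ was arbitrary, this is the claimed identity.

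The computation has no genuine obstacle; the only point requiring care is the bookkeeping of the three derivative factors in $S'(y)$ and the cancellation that produces the single prefactor $1/h'$ while converting $w\circ h$ into the weighted argument $(w\circ h)h'$ of $\mathcal L_T$. I would in particular double-check that the factor $h'(T(h^{-1}(y)))$ equals $h'(x)$ and hence depends only on $x$, so that it factors cleanly out of the sum and lands as the prefactor $1/h'$ on the right-hand side, rather than remaining trapped inside the summation.
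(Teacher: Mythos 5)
Your proof is correct and follows essentially the same route as the paper: a direct change of variables $y = h(z)$ matching the $S$-preimages of $h(x)$ with the $T$-preimages of $x$, combined with the chain-rule identity $S'(h(z)) = h'(Tz)\,T'(z)/h'(z)$ to cancel the derivative factors. The only cosmetic difference is that the paper obtains this identity by differentiating $S\circ h = h\circ T$, whereas you differentiate $S = h\circ T\circ h^{-1}$ and invoke $(h^{-1})' = 1/(h'\circ h^{-1})$; these are equivalent.
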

\begin{proof}
We can differentiate $S \circ h = h \circ  T$ and use the chain rule to  write
$$
(S'\circ h) h' = (h'\circ T) T'.
$$
Let us write $y = h(y')$   and $x' = h^{-1}(x)$ then we have 
$$
\begin{aligned}
\mathcal L_Sw(x) = \sum_{S(hy') = x}  \frac{w(y)}{S' \circ h(y')}
&= \sum_{T(y') = h^{-1}(x)}  \frac{w(hy') h'(y')}{h'(Ty') T'(y')}\cr
&=  \sum_{T(y') = x'}  \frac{(w\circ h)(y') h'(y')}{h'(Ty') T'(y')}\cr
&= \frac{1}{h'(x')} \sum_{T(y') = x'}  \frac{(w\circ h)(y') h'(y')}{T'(y')}.\cr
\end{aligned}
$$
This corresponds to the required identity.
\end{proof}

If we assume that $T$ has an invariant density $\rho$ then we know that $\mathcal L_T \rho = \rho \in C^{k-1}(X)$.  By a suitable choice of coordinates we can assume that $S(0)=0=T(0)$ are  corresponding fixed points.   We can then define $h: X \to X$ by
$h(x) = \int_0^x \rho(u) du$.  Then $h'(x) = \rho(x)$ and by the Lemma \ref{L1} with $w $ taking the constant value $1$ we can write
$$
 (\mathcal L_S1)\circ h  =  \frac{1}{h'} \mathcal L_T (1  h') = \frac{1}{\rho} \mathcal L_T (\rho) =1.
$$
We then conclude that $(\mathcal L_S1) =1$.  In particular, the transformation $S: X \to X$ has density $1$
(i.e., $S$ preserves the Haar measure on $X$).

\begin{lemma}\label{solution}
Let $T: X \to X$ be a $C^k$ map with $k \geq 2$.   For any $f \in C^{k-1}(X)$ we can find  $g\in C^{k-1}(X)$ such that $f = \mathcal L_T(g)$.   Moreover, we can find an infinite dimensional set of such solutions $g$.
\end{lemma}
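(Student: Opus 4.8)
The plan is to produce one explicit particular solution and then exhibit an infinite-dimensional kernel for $\mathcal{L}_T$ on $C^{k-1}(X)$; every element of the solution set is then obtained by adding a kernel element to the particular solution.

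For existence I would exploit the fact that a $C^k$ expanding map of degree $d$ has exactly $d$ smooth inverse branches through every point, so that $\sum_{Ty=x}1 = d$ identically. The natural candidate is
$$
g_0(y) = \frac{1}{d}\, f(T(y))\, T'(y),
$$
because substituting into the definition of the transfer operator and using $f(T(y)) = f(x)$ on each fibre gives
$$
\mathcal{L}_T g_0(x) = \sum_{Ty=x}\frac{g_0(y)}{T'(y)} = \frac{1}{d}\sum_{Ty=x} f(x) = f(x).
$$
Since $T \in C^k$ forces $T' \in C^{k-1}$ and $f\circ T \in C^{k-1}$, the product $g_0$ lies in $C^{k-1}(X)$, which settles the first assertion. (Alternatively one could conjugate to the Haar-preserving map $S$ through Lemma \ref{L1} and solve there, but the direct formula is cleaner and uses only the degree.)

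For the infinite-dimensional family I would add elements of $\ker \mathcal{L}_T$ to $g_0$. To build such elements, fix a short arc $W \subset X$ small enough that two of the inverse branches $y_1,y_2$ carry $W$ to disjoint arcs $W_1 = y_1(W)$, $W_2 = y_2(W)$ lying in the interiors of their respective branch domains and away from the cut point. Given $\chi \in C^{k-1}(X)$ with $\operatorname{supp}\chi \Subset W$, define
$$
u(y) = \begin{cases} +\,T'(y)\,\chi(T(y)), & y \in W_1,\\ -\,T'(y)\,\chi(T(y)), & y \in W_2,\\ 0, & \text{otherwise.}\end{cases}
$$
Because $\operatorname{supp}\chi \Subset W$, the two nonzero pieces are supported in compact subsets of the interiors of $W_1$ and $W_2$, so $u$ glues to a genuine $C^{k-1}$ function on the circle. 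For $x \in W$ only the branches $y_1(x),y_2(x)$ contribute and the two terms cancel after dividing by $T'$, while for $x \notin W$ every $u(y_i(x))$ vanishes; hence $\mathcal{L}_T u = 0$. The assignment $\chi \mapsto u$ is linear and injective, so letting $\chi$ range over bumps with pairwise disjoint supports inside $W$ yields infinitely many linearly independent kernel elements, and $\{g_0 + u\}$ gives the claimed infinite-dimensional solution set.

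The main point to watch is topological rather than analytic: individual inverse branches of a circle map are not globally defined smooth functions (they ``jump'' at the cut point), so the particular solution must be written through the globally smooth combination $g_0$ rather than branch-by-branch, and the kernel elements must be arranged to vanish to the required order at the arc endpoints. Keeping $\operatorname{supp}\chi$ compactly inside $W$ and $W_1,W_2$ inside the interiors of the branch domains is precisely what guarantees the $C^{k-1}$ gluing, and this is where the hypothesis $k \geq 2$ (ensuring $T' \in C^{k-1}$ with $k-1 \geq 1$) enters.
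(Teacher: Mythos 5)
Your proof is correct, and the existence half takes a genuinely different route from the paper. The paper conjugates $T$ to $S=h\circ T\circ h^{-1}$ via $h(x)=\int_0^x\rho(u)\,du$, so that $S$ preserves Haar measure ($\mathcal L_S1=1$); there the solution is simply $g=f\circ S$, and transporting back through Lemma \ref{L1} yields $g=((f/\rho)\circ h^{-1}\circ S\circ h)\,\rho$, which is just $((f/\rho)\circ T)\,\rho$. Your formula $g_0=\tfrac1d\,(f\circ T)\,T'$ instead rests on the identity $\mathcal L_T(T'/d)=1$, which holds for purely topological reasons (every point has exactly $d$ preimages). Both are instances of one mechanism: if $\mathcal L_T\phi=\psi$ with $\phi,\psi>0$, then $g=((f/\psi)\circ T)\,\phi$ solves $\mathcal L_Tg=f$; the paper takes $\phi=\psi=\rho$, you take $\phi=T'/d$, $\psi=1$. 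Your choice is more elementary --- it avoids invoking the existence and $C^{k-1}$ regularity of the invariant density and needs no conjugation lemma at all --- while the paper's route yields as a by-product the structurally interesting fact that every smooth expanding circle map is smoothly conjugate to a Lebesgue-measure-preserving one. For the infinite-dimensional kernel, your construction is the same idea the paper only sketches (cancellation of mass between two inverse branches, $\mathcal L_T\rho_1=-\mathcal L_T\rho_2$), but carried out explicitly and with the correct attention to keeping supports compactly inside the branch images so the pieces glue to genuine $C^{k-1}$ functions on the circle; that care, plus injectivity of $\chi\mapsto u$, is exactly what the paper's one-line remark leaves to the reader.
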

\begin{proof}
Given $f \in C^{k-1}(X)$ we can first look for a solution $g \in C^{k-1}(X)$ to $f= \mathcal L_Sg$.   However, since $\mathcal L_S 1 = 1$
we see that it merely suffices to choose $g = f \circ S$, since then    
$$
\begin{aligned}
\mathcal L_S(g)(x)  &=  \mathcal L_S(f\circ S)(x) =
\sum_{Sy = x} \frac{f(Sy)}{S'(y)} = f(x)  \sum_{Sy = x} \frac{1}{S'(y)} \cr
&  = f(x)
\underbrace{ (\mathcal L_S1)(x)}_{=1} = f(x)
 \end{aligned}
 $$ 
and  the result follows.  Furthermore, 
there are clearly  uncountably many such solutions $g$.

In the general case,  we can apply the identity in the previous lemma  
 $\mathcal L_T \left((w\circ h)  \rho\right)
 =
 \rho
 (\mathcal L_Sw)\circ h$
 with the choice
$w= (f/\rho) \circ h^{-1}\circ S$.
This gives that 
$$ 
\begin{aligned}\mathcal L_T \left(( (f/\rho) \circ h^{-1}\circ S \circ h) \rho\right)
& =
 \rho
 (\mathcal L_S ((f/\rho)\circ h^{-1} \circ S))\circ h\cr
 &  = \rho  ((f/\rho) \circ h^{-1}) \circ h =  f .
 \end{aligned}
 $$
 Thus it suffices to let $g = ((f/\rho) \circ h^{-1}\circ S \circ h)  \rho$ to get the required result.
 
Given a solution $g$, every point of the set $g+ker({\mathcal L }_T)$  is again a solution. Moreover, ${\mathcal L }_T$ is an infinite dimensional space. Indeed consider the intervals $I_1,...,I_n$ where the branches of $T_i$ of $T$ are defined, then given a $C^{k-1}$ density $\rho _1$  supported on  the interior of $I_1$ it is easy to find a density $\rho _2$  supported in $I_2$ such that  ${\mathcal L }_T (\rho _1)=-{\mathcal L }_T (\rho _2).$
\end{proof}

\smallskip
\noindent
{\it Proof of Theorem \ref{T1}.}
Recall that in  equation (\ref{prob}) we have 
$$
(I - \mathcal L_0) \rho^{(1)}(x) = 
 \mathcal L_0\left(-\frac{\rho \epsilon'}{T_0'}  -
\frac{ \epsilon \rho '}{T_0'}  + \frac{\epsilon T_0''}{(T_0')^2} \right)(x).
$$
The left hand side is a $C^{k-2}$ function, by  assumption. Since $T_0$ is $C^k$ then  we have 
$\rho \in C^{k-1}(X)$.  By Lemma \ref{solution} we can choose a $C^{k-1}$ solution $g_1$ for the step 2 described above, 
 such that $(I - \mathcal L_0) \rho^{(1)}(x) =  \mathcal L_0\left(g \right)(x)$.
 Finally this means  that the differential equation we need to solve is
$$g_1 = 
-\epsilon' \left( \frac{\rho}{T_0'}\right)  - \epsilon\left(  \frac{  \rho '}{T_0'} \right) + \left(  \frac{\epsilon T_0''}{(T_0')^2} \right), 
$$
which 
has at least $C^{k-2}$ coefficients,  and hence a $C^{k-1}$ family of solutions, proving the statement.
Each such  solution is a    solution to Problem  (\ref{pr1})  by Theorem \ref{perturbation}.
\qed

\medskip

In particular, taking $k=5$ gives the following corollary.

\begin{cor}\label{dopo}
For a $C^5$ expanding map of the circle $T_0$,  any first order perturbation $\rho^{(1)} \in C^3(X)$ in the density can be realised by a suitable first order perturbation $\epsilon \in C^4(X)$ in the transformation.
\end{cor}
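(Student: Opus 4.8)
The plan is to deduce this corollary directly from Theorem \ref{T1} by specializing to the value $k = 5$. First I would verify that the hypothesis $k \geq 4$ of Theorem \ref{T1} is satisfied, which is immediate since $5 \geq 4$. The requirement there that $T_0$ be a $C^k$ expanding map of the circle becomes, for $k=5$, precisely the assumption that $T_0$ is a $C^5$ expanding map, as stated in the corollary.

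The remainder of the argument is purely a bookkeeping of the regularity indices. Theorem \ref{T1} permits a first order density perturbation $\rho^{(1)}$ of class $C^{k-2}$ and produces a corresponding first order transformation perturbation $\epsilon$ of class $C^{k-1}$. Substituting $k=5$ yields $C^{k-2} = C^3$ and $C^{k-1} = C^4$, which match exactly the hypothesis $\rho^{(1)} \in C^3(X)$ and the asserted conclusion $\epsilon \in C^4(X)$. Thus the corollary follows immediately by invoking Theorem \ref{T1} with $k=5$.

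Since the corollary is a pure specialization of the preceding theorem, there is no genuine obstacle to overcome: the only thing to check is the elementary arithmetic $k-2 = 3$ and $k-1 = 4$. One could optionally remark that the ``infinite dimensional space of perturbations'' clause of Theorem \ref{T1} also carries over, giving in fact an infinite dimensional family of admissible $\epsilon \in C^4(X)$, although the statement as phrased does not require this refinement.
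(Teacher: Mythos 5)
Your proposal is correct and coincides exactly with the paper's own argument: the paper derives the corollary by the single remark ``In particular, taking $k=5$ gives the following corollary,'' which is precisely your specialization of Theorem \ref{T1} with the arithmetic $k-2=3$ and $k-1=4$. Nothing further is needed.
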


\subsection{Optimal solutions}

We now consider the problem of finding an optimal solution amongst the many different possible solutions.
 It is natural to minimize the size of the perturbation in a suitable norm. 
Since we are considering smooth dynamics and perturbations  we can choose the following 
natural Sobolev-type norm
$$ || f||_{abcd}=||f||_2+a||f'||_2+b||f''||_2+c||f'''||_2+d||f^{i v}||_2$$
for given  $a,b,c,d \geq 0 , d>0$.
With this norm, the associated  space $H^4$ of functions having $L^2$ fourth derivatives is 
a Hilbert space.
\begin{proposition}\label{min} If $\rho^{(1)} \in C^3$  and $T_0\in C^5 $
then   in the space of $H^4$  solutions  to  (\ref{prob}) there is a unique minima 
with respect to the $ || \cdot   ||_{abcd}$ norm.
\end{proposition}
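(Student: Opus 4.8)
\emph{Proof proposal.} The plan is to realise the set of $H^4$ solutions of (\ref{prob}) as a nonempty closed affine subspace of a Hilbert space and then to appeal to the Hilbert projection theorem, which yields a unique point of minimal norm in any nonempty closed convex set. First I would fix the Hilbert structure. Since $d>0$, the $L^2$ term and the $d$-weighted fourth-derivative term together dominate every intermediate derivative: on the circle $|2\pi n|^{2j}\le C(1+|2\pi n|^8)$ for $0\le j\le 4$, so by Parseval $\|\cdot\|_{abcd}$ is equivalent to the standard $H^4$ norm even when $a=b=c=0$. Consequently $H^4$ endowed with the inner product
$$\langle f,g\rangle_{abcd}=\langle f,g\rangle_2+a\langle f',g'\rangle_2+b\langle f'',g''\rangle_2+c\langle f''',g'''\rangle_2+d\langle f^{(iv)},g^{(iv)}\rangle_2$$
(with $\langle\cdot,\cdot\rangle_2$ the $L^2$ inner product, and induced norm equivalent to $\|\cdot\|_{abcd}$) is a Hilbert space, hence strictly convex, so minimal-norm elements of closed convex sets are unique.

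Next I would encode (\ref{prob}) as a single linear equation $A\epsilon=b$, where
$$A\epsilon:=\mathcal{L}_0\!\left(-\frac{\rho\,\epsilon'}{T_0'}-\frac{\epsilon\,\rho'}{T_0'}+\frac{\epsilon\,T_0''}{(T_0')^2}\right),\qquad b:=(I-\mathcal{L}_0)\rho^{(1)}.$$
The solution set is then the affine set $\mathcal{S}=A^{-1}(\{b\})=\epsilon_\ast+\ker A$ for any particular solution $\epsilon_\ast$. Such an $\epsilon_\ast$ exists in $C^4\subset H^4$ by Corollary \ref{dopo}, which is precisely the case $T_0\in C^5$, $\rho^{(1)}\in C^3$ at hand, so $\mathcal{S}\neq\emptyset$.

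The crux is the closedness of $\mathcal{S}$, for which it suffices to prove that $A:H^4\to L^2$ is bounded (then $\ker A$ is closed and $\mathcal{S}$, being a translate of it, is closed). The coefficients $\rho/T_0'$, $\rho'/T_0'$ and $T_0''/(T_0')^2$ are continuous (in fact $C^3$, since $T_0\in C^5$ forces $\rho\in C^4$), so the first-order operator $\epsilon\mapsto -\tfrac{\rho}{T_0'}\epsilon'+\big(-\tfrac{\rho'}{T_0'}+\tfrac{T_0''}{(T_0')^2}\big)\epsilon$ is bounded from $H^4$ into $L^2$. For the transfer operator, Cauchy--Schwarz gives
$$|\mathcal{L}_0 w(x)|^2\le\Big(\sum_i\tfrac{1}{T_0'(y_i)}\Big)\Big(\sum_i\tfrac{|w(y_i)|^2}{T_0'(y_i)}\Big)\le\|\mathcal{L}_0 1\|_\infty\,\mathcal{L}_0(|w|^2)(x),$$
and integrating, using $\int_X\mathcal{L}_0(|w|^2)=\int_X|w|^2$ together with $\|\mathcal{L}_0 1\|_\infty<\infty$, yields $\|\mathcal{L}_0 w\|_2\le\|\mathcal{L}_0 1\|_\infty^{1/2}\|w\|_2$. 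Hence $A$ is bounded and $\mathcal{S}$ is a nonempty closed affine (so convex) subset of $H^4$.

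Finally, the projection theorem in the Hilbert space $(H^4,\langle\cdot,\cdot\rangle_{abcd})$ supplies a unique point of $\mathcal{S}$ nearest to the origin, that is, a unique minimiser of $\|\cdot\|_{abcd}$ on $\mathcal{S}$, as claimed. I expect the only genuine obstacle to be the closedness step, namely the boundedness of $A$ on $H^4$; this rests on the transfer-operator $L^2$-bound above and on checking that the regularity hypotheses make the coefficients continuous. Once $A$ is continuous, the remainder is soft Hilbert-space theory.
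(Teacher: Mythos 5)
Your proposal is correct and follows essentially the same route as the paper: both realize the $H^4$ solution set of (\ref{prob}) as a nonempty (via Corollary \ref{dopo}) closed affine subspace of the Hilbert space $H^4$, with closedness obtained from continuity of the linear operator on the right-hand side as a map $H^4 \to L^2$, followed by the Hilbert-space minimal-norm/projection argument. The only difference is that you fill in details the paper leaves implicit (the Cauchy--Schwarz $L^2$-bound for $\mathcal{L}_0$, the continuity of the coefficients, and the inner product underlying $\| \cdot \|_{abcd}$), which is a refinement rather than a different approach.
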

\begin{proof} We  begin by  observing that the equation
 \begin{equation}\label{prob2}
(I - \mathcal L_0) \rho^{(1)} = 
 \mathcal L_0\left(-\frac{\rho \epsilon'}{T_0'}  -
\frac{ \epsilon \rho '}{T_0'}  + \frac{\epsilon T_0''}{(T_0')^2} \right)
\end{equation}
is equivalent to $g=\tilde{ L} ( \epsilon  )$,
where $g=(I - \mathcal L_0) \rho^{(1)}$  and $\tilde{ L}$ is the linear operator on the Right Hand Side  of equation (\ref{prob2}). We observe that  $\tilde{ L}$ is continuous as an operator from $H^4$ to  $L^2(X)$ and 
thus $ker(\tilde{ L}) $ is a closed space in $H^4$.  Moreover, the  space of solutions of \ref{prob2} in $H^4$ is not empty, because of Corollary \ref{dopo}.
We can therefore deduce that the  space of solutions to (\ref{prob}) is a closed affine space on which we can search for an element of  minimum  norm.  Finally, since we are in a Hilbert space there is a unique minima.
Indeed,  we can take a solution $v$ of (\ref{prob2}) and then subtract its projection on $ker(\tilde{ L}) $, this is  orthogonal to $ker(\tilde{ L}) $ and thus to the affine space of solutions $ker(\tilde{ L}) +v$. This solution necessarily  minimizes the norm.
\end{proof}

\begin{remark}
The minimal solution found in Proposition \ref{min} is an actual solution of the initial problem. Indeed let us call $\epsilon_0 $ this solution. Then
 $$\mathcal L_0\left(-\frac{\rho \epsilon_0'}{T_0'}  - \frac{ \epsilon_0 \rho '}{T_0'}  + \frac{\epsilon_0 T_0''}{(T_0')^2} \right)\in {W^{1,1}}$$ and then by Theorem \ref{perturbation} (which can be applied since the solution $\epsilon _0 $ is in $C^3$) we get
 \begin{equation}
 \rho^{(1)} = (I - \mathcal L_0)^{-1}  \mathcal L_0\left(-\frac{\rho \epsilon_0'}{T_0'}  - \frac{ \epsilon_0 \rho '}{T_0'}  + \frac{\epsilon_0  T_0''}{(T_0')^2} \right)
\end{equation}
is the required linear response associated to the first order perturbation $\epsilon _0 $.
\end{remark}

\begin{remark}The main point of the optimization procedure is an orthogonalization.  It seems that this can be efficiently implemented by an algorithm to produce the optimal solution, once an effective characterization of $ker(\tilde{ L}) $ is provided.   
\end{remark}

\section{Example: the doubling map}
In this section we  consider a simple example which can be easily  analysed using classical Fourier series.  

Let $T: X \to X$ be  the doubling map given by $T(x) = 2 x \hbox{ (mod $1$)}$
 then $T_0' = 2$, $T_0'' = 0$ and $\rho=1$ and $\rho'=0$.
Let us write the  prescribed  perturbation   $\rho^{(1)}(x) $ in the density as a Fourier series, i.e., 
$$
\rho^{(1)}(x) = \sum_{n \in \mathbb Z} a_n e^{2\pi i n x}
$$
and then observe that since
$$
\mathcal L_0 \left( e^{2\pi i n x}\right)
= 
\begin{cases}
e^{2\pi i (n/2) x} & \hbox{ if } $n$ \hbox{ is even}\\
0 & \hbox{ if } $n$ \hbox{ is odd }
\end{cases}
$$
we have that the equation we need to solve becomes 
$$
(I - \mathcal L_0) \rho^{(1)}(x) =\mathcal L_0 \left(-\frac{\epsilon ^\prime }{2} \right)
$$
where 
$$
(I - \mathcal L_0) \rho^{(1)}(x) =  \sum_{n \in \mathbb Z} (a_n - a_{2n})e^{2\pi i n x}.
$$
Moreover, given any function $f(x)$ written in the form 
$$f(x) = 
 \sum_{n \in \mathbb Z} b_n e^{2\pi i n x}
$$ 
we see that
$$
(\mathcal L_0 f)(x) = \sum_{n \in \mathbb Z} b_{2n} e^{2\pi i n x}.
$$
Comparing coefficients, for  $f(x) (= -\frac{\epsilon^ \prime (x)}{2})$ to be a solution  to 
$(I - \mathcal L_0) \rho^{(1)}=  \mathcal L_0(f)$ now  
corresponds to 
\begin{enumerate}
\item
$b_{n}
= 
a_{n/2} - a_n$ if $n$  is even;
\item 
$b_n$ have no restrictions if $n$ is odd
\end{enumerate}
and finally we see that the (infinitely many) solutions to the linear perturbation are solutions to 
$$-\epsilon'(x) = 2f(x) = 
  \sum_{n \in \mathbb Z} (2b_n) e^{2\pi i n x}.
$$  
That is
$$\epsilon(x) =  
 - \sum_{n \in \mathbb Z} \frac{b_n}{\pi i n} e^{2\pi i n x}
$$
(where the constant of integration corresponding to $b_0$ is actually zero), and  a solution is

$$\epsilon(x) =  
 - \sum_{n \in 2\mathbb Z} \frac{a_{n/2}-a_n}{\pi i n} e^{2\pi i n x} +  \sum_{n \in \mathbb Z- 2\mathbb Z} \frac{c_n}{\pi i n} e^{2\pi i n x}.
$$
For every $\{ c_i\} \in l^2$.
$$
\| \epsilon\|_2^2= 
\sum_{n \in \mathbb Z}  \frac{ |a_{2n } - a_n|^2}{\pi^2 (2n)^2} + \sum_{n \in \mathbb Z}\frac{|c_{2n+1}|^2}{\pi^2(2n+1)^2}.
$$
In particular, this is minimised when $c_{2n+1}=0$ for $n \in \mathbb Z$  and leaves us with a  distinguished solution
$$
\epsilon _0 (x) = \sum_{n} \frac{(a_n - a_{2n})}{2 \pi i n} e^{2\pi i 2n x}.
$$

Similarly for the $|| \cdot ||_{abcd} $ norm we can reason as before,  having the same 
 distinguished solution $\epsilon_0$.

\begin{example}
In the particular case of the doubling map and $\rho^{(1)} = \sin(2\pi x)$ we have that $a_1 = \frac{1}{2i}$
and $a_{-1} =- \frac{1}{2i}$ and all the other terms are zero.  Thus $2b_2 = i$ and $2b_{-2} = -i$
and all the other terms are zero.
Thus we can write
$$
\begin{aligned}
\epsilon(x)& = \frac{e^{4i\pi x}}{4\pi } +  \frac{e^{-4i\pi x}}{4\pi } + \sum_{n \in \mathbb Z} \frac{b_{2n+1}}{2\pi n} e^{2\pi i (2n+1) x}\cr
&= \frac{1}{2\pi} \cos(4\pi x) +  \sum_{n \in \mathbb Z} \frac{b_{2n+1}}{2\pi n} e^{2\pi i (2n+1) x}.
\end{aligned}
$$
If we additionally want to choose the $u(x)$ so as to minimise the $L^2$-norm then
$$
\epsilon_0 (x)= \frac{1}{2\pi} \cos(4\pi x)  
$$
 and 
$$
\|\epsilon_0 \|_2 = \sqrt{ \int_0^1 \left(  \frac{1}{2\pi} \cos(4\pi x)\right)^2dx} = \frac{\sqrt{8}}{8\pi}.
$$
\end{example}

\begin{remark}
It is natural to consider in which way the previous results could be generalized to systems with more dimensions and with contracting directions. In this case, using suitable anisotropic norms, we can have a spectral gap, and probably  the general structure of the problem remains similar, with  Proposition \ref{perturbation} applying to a suitable  space of distributions, in a way similar to that which we have seen for circle expanding maps, however the formula in \ref{mainprop}  is quite specific to the expanding case, and in the general case a suitable generalization of the derivative operator  should apply to measures and distributions.
\end{remark}

\begin{remark}
After the completion of this manuscript, B. Kloeckner (\cite{Kl}) investigated a control problem similar to the one considered here, although
in that work,  only restricted  perturbations of the system coming from a smooth conjugacy were considered. With this point of view, using ideas similar to those in \cite{Mo}, it was  proved that there is always  at least one solution of the control problem for a large class of systems preserving a smooth invariant measure.  However, the class of admissible changes used in that paper is much smaller than in the present work, and the class of solutions found is correspondingly smaller.  In particular, the optimal changes (arising from conjugacies) found in  \cite{Kl} for expanding maps  may be very different from the ones we found in the present work, as it is shown in Section 2.4 of  \cite{Kl} for the doubling map.
\end{remark}

\hfill Email: stefano.galatolo@unipi.it, M.Pollicott@warwick.ac.uk
\end{document}